\def\begeq{\begin{equation}}
\def\endeq{\end{equation}}
\title[A Rigidity Theorem for hypersurfaces]{A Rigidity Theorem for hypersurfaces in higher dimensional space forms}
\author{Pengfei Guan and Xi Sisi Shen}
\address{
        Department of Mathematics and Statistics\\
        McGill University\\
        Montreal, Quebec. H3A 2K6, Canada.}
\email{guan@math.mcgill.ca, xi.shen@mail.mcgill.ca}
\thanks{Research of the first author was supported in part by NSERC Discovery Grant.}
\date{}
\newtheorem{lemma}{Lemma}
\newtheorem{thm}{Theorem}
\begin{document}
\begin{abstract}
We prove a rigidity theorem for hypersurfaces in space form $N^{n+1}(K)$, generalizing the classical Cohn-Vossen theorem.
\end{abstract}
\subjclass{53A05, 53C24}

\maketitle

The classical Cohn-Vossen theorem \cite{CV} states that two isometric compact convex surfaces in $\mathbb R^3$ are congruent. There is a vast literature devoted to the study of rigidity of hypersurfaces, a good source of reference is \cite{spivak}. In this short note, we prove a higher dimensional version of the Cohn-Vossen Theorem for hypersurfaces in space form $N^{n+1}(K)$, $n\ge 2$. The original convexity assumption in the Cohn-Vossen Theorem will be replaced by the assumption that hypersurfaces are star-shaped with normalized scalar curvature $R>K$. When $K=0$ and $n=2$, $N^{n+1}(K)=\mathbb R^{3}$ and the scalar curvature is the Gauss curvature. Positivity of Gauss curvature of $M$ implies the embedding is convex, it in turn is star-shaped with respect to any interior point. 

\medskip

The key ingredient in our proof of this higher dimensional generalization of Cohn-Voseen theorem is the integral formula (\ref{zero0}) we will establish. We will make crucial use of a conformal Killing field associated with the polar potential of the space form. 

Let $N^{n+1}(K)$ be a simply connected $(n+1)$-dimensional space form with constant curvature $K$. We will assume $K=-1,0$ or $+1$ and $n\ge 2$. Denote $g^N:=ds^2$ the Riemannian metric of $N^{n+1}(K)$. We will use the geodesic polar coordinates.
Let $\mathbb S^n$ be the unit sphere in Euclidean space $\mathbb R^{n+1}$ with standard induced metric $d\theta^2$, then
\begin{equation}
  g^N:= ds^2=d\rho^2+\phi^2(\rho)d\theta^2.
  \label{polarcoord}
\end{equation}
For the Euclidean space $\mathbb R^{n+1}$, $\phi(\rho)=\rho$, $\rho\in [0,\infty)$;
for the elliptic space $\mathbb S^{n+1}$, $\phi(\rho)=\sin(\rho)$, $\rho\in [0,\pi)$; and for the hyperbolic space $\mathbb H^{n+1}$, $\phi(\rho)=\sinh(\rho)$, $\rho\in [0,\infty)$.
We define the corresponding polar potential function $\Phi$ and a key vector field $V$ as
\begin{equation}
  \Phi(\rho)=\int^\rho_0 \phi(r)dr, \quad V=\phi(\rho)\frac{\partial}{\partial \rho}
  \label{Phi}
\end{equation}
Therefore, for the cases $K=-1,0, +1$, the corresponding polar potentials are $-\cos\rho$, $\frac{\rho^2}{2}$, and $\cosh\rho$, respectively. It is well known that $V$ on $N^{n+1}(K)$ is a conformal Killing field and it satisfies $D_iV_j=\phi'(\rho)g^N_{ij}$, where $D$ is the covariant derivative with respect to the metric $g^N$. Let $M\subset N^{n+1}(K)$ be a closed hypersurface with induced metric $g$ with outer normal $\nu$. We define support function as
\begin{equation}
u=<V,\nu>.\end{equation}
The hypersurface $M$ is star-shaped with respect to the origin if and only if $u>0$. The following identity (e.g., see \cite{GL}) will play an important role in our derivation.
\begin{equation}
    \nabla_{i}\nabla_j\Phi = \phi'(\rho) g_{ij}-h_{ij}u,
    \label{hessianPhi}
  \end{equation}
where $\nabla$ is the covariant derivative with respect to the induced metric $g$ and $h=(h_{ij})$ is the second fundamental form of the hypersurface. Denote $W=g^{-1}h$ the Weingarten tensor, define the $2^{nd}$ symmetric function of Weingarten tensor $W$ by
\begin{equation}\label{defsigma}
\sigma_{2}(W) =\sum_{i<j}(w_{ii}w_{jj}-w_{ij}w_{ji})=\sum_{i<j}\kappa_i \kappa_j,
\end{equation}
where $\kappa=(\kappa_1,\cdots,\kappa_n)$ are the eigenvalues of $W$ which are the principal curvatures of $M$. The relationships of the principal curvatures and the normalized scalar curvature $R$ of the induced metric from ambient space $N^{n+1}(K)$ are as follow.
\begin{equation}\label{scalarcurv}
\sigma_2(W)=\frac{n(n-1)}{2}(R-K).\end{equation}
Since $R$ is invariant under isometries, so is $\sigma_2(W)$. From definition (\ref{defsigma}),
\begin{eqnarray*}
\frac{\partial\sigma_{2}}{\partial w_{ij}}(W)=\sum_{l\neq i}w_{ll}, \quad \mbox{if $i=j$};\quad
\frac{\partial\sigma_{2}}{\partial w_{ij}}(W)=-w_{ji}, \quad \mbox{if $i\neq j$}.\end{eqnarray*}

Define
\begin{equation}\label{N+} N^{n+1}_{+}(K)=N^{n+1}(K), \quad \mbox{ $K=-1$ or $0$;} \quad N^{n+1}_{+}(K)=S^{n+1}_{+},  \mbox{$K=1$,}\end{equation}where $S^{n+1}_{+}$ is any open hemisphere. We will restrict ourselves to hypersurfaces in $N^{n+1}_{+}(K)$. Suppose $M$ and $\tilde M$ are two isometric connected compact hypersurfaces of $N^{n+1}(K)$ with the normalized $R> K$. We may identify any point $\tilde x\in \tilde M$ as a point $x\in M$ through the isometry. This identification will be used in the rest of this article. We may then choose an orthonormal frame on $M$ and by the isometry this will correspond to the same orthonormal frame on $\tilde{M}$. We may view $M$ as a base manifold, the local orthonormal frames of $M$ and $\tilde M$ can be identified as the same. Denote $W$ and $\tilde W$ the corresponding Weingarten tensors of $M$ and $\tilde M$, respectively. For any fixed local orthonormal frame on $M$, the polarization of $\sigma_2$
of two symmetric tensors $W=(w_{ij})$ and $\tilde W=(\tilde w_{ij})$ is defined as
\begin{equation}\label{defsigma2}
\sigma_{1,1}(W,\tilde W) = \frac{1}{2}\sum_{i,j} \frac{\partial \sigma_{2}(W)}{\partial w_{ij} }\tilde{w}_{ij}
\end{equation}

The following lemma is a special case of the Garding inequality \cite{garding}.
\begin{lemma}\label{garding}
Given two isometric compact hypersurfaces $M$ and $\tilde{M}$ in $N^{n+1}_{+}(K)$ with the normalized scalar curvature $R>K$, then
\[\sigma_{2}(W)(x)-\sigma_{1,1}(W,\tilde W)(x)\le 0, \quad \forall x\in M.\] If the equality holds at some point $x$, then $W(x)=\tilde W(x)$.
\end{lemma}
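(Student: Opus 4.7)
The plan is to recognize this as an application of Gårding's inequality for the hyperbolic polynomial $\sigma_2$ on the space of symmetric matrices, combined with the fact that the scalar curvature is an intrinsic quantity. Recall Gårding's inequality in this setting: if $W$ and $\tilde W$ both lie in the Gårding cone $\Gamma_2=\{A: \sigma_1(A)>0,\ \sigma_2(A)>0\}$, then the polarization (\ref{defsigma2}) satisfies
\[
\sigma_{1,1}(W,\tilde W)\ \ge\ \sqrt{\sigma_{2}(W)\,\sigma_{2}(\tilde W)},
\]
with equality if and only if $W$ and $\tilde W$ are positively proportional.

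First I would note that since $M$ and $\tilde M$ are isometric and $\sigma_2(W)=\tfrac{n(n-1)}{2}(R-K)$ depends only on the intrinsic metric and the ambient constant curvature $K$, we have the pointwise identity $\sigma_2(W)(x)=\sigma_2(\tilde W)(x)>0$ at every $x\in M$. Thus, once we know $W(x),\tilde W(x)\in\Gamma_2$, Gårding immediately yields
\[
\sigma_{1,1}(W,\tilde W)(x)\ \ge\ \sqrt{\sigma_2(W)\sigma_2(\tilde W)}(x)\ =\ \sigma_2(W)(x),
\]
which is the claimed inequality. For the equality case at a point $x$, Gårding forces $\tilde W(x)=c\,W(x)$ for some $c>0$; substituting into $\sigma_2(\tilde W)=\sigma_2(W)>0$ gives $c^{2}=1$, hence $c=1$ and $W(x)=\tilde W(x)$.

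The main obstacle is verifying that both Weingarten tensors actually take values in the Gårding cone $\Gamma_2$, rather than merely in the larger set $\{\sigma_2>0\}$. Here I would use a standard geometric argument: since $M$ is a closed hypersurface in $N^{n+1}_+(K)$, consider a point $p\in M$ where the polar distance $\rho$ from some chosen origin (an interior point in the Euclidean/hyperbolic case, or the center of the hemisphere in the spherical case) attains its maximum. At such a $p$, the tangent plane to $M$ is tangent to the geodesic sphere of radius $\rho(p)$, and $M$ lies locally on one side, so with the appropriate choice of outward normal all principal curvatures at $p$ are strictly positive (bounded below by $\phi'(\rho(p))/\phi(\rho(p))>0$ via comparison). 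Consequently $W(p)\in\Gamma_2$. Since $M$ is connected, the image $\{W(x):x\in M\}$ is a connected subset of the open set $\{\sigma_2>0\}$ that meets $\Gamma_2$; because $\Gamma_2$ is precisely the connected component of $\{\sigma_2>0\}$ meeting the positive cone, we conclude $W(x)\in\Gamma_2$ for all $x\in M$. The same argument applied to $\tilde M$ gives $\tilde W\in\Gamma_2$, completing the hypothesis needed for Gårding and hence the proof.
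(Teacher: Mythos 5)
Your proof is correct and takes essentially the same route as the paper's: establish $\sigma_2(W)=\sigma_2(\tilde W)>0$ from the intrinsic formula, show $W,\tilde W\in\Gamma_2$ by producing a point of positive definiteness (compactness/farthest point) plus connectedness, and then invoke Garding's inequality for the polarization of $\sigma_2$. You merely make explicit the details the paper delegates to \cite{garding} and to the word ``compactness,'' namely the quantitative form $\sigma_{1,1}(W,\tilde W)\ge\sqrt{\sigma_2(W)\sigma_2(\tilde W)}$, the proportionality equality case forcing $c=1$, and the tangency comparison at the farthest point.
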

\begin{proof} From identity (\ref{scalarcurv}), $\sigma_2(W)=\sigma_2(\tilde W)>0$. By the compactness,
there are points $p\in M$ and $\tilde p\in \tilde M$ such that $W(p)>0$ and $\tilde W(\tilde p)>0$. This implies $W(x), \tilde W(x)\in \Gamma_2, \forall x\in M$, where $\Gamma_2=\{\sigma_1(W)>0, \sigma_2(W)>0\}$ is the Garding cone. The lemma follows from the Garding inequality \cite{garding}.\end{proof}

Suppose $g\in C^3$ and suppose $e_{1},\ldots, e_{n}$ is a local orthonormal frame on $M$ and $W=(w_{ij})$ is a Codazzi tensor on $M$, then for each $i$,
\begin{eqnarray}\label{Codazzi}
\sum_{j=1}^{n}(\frac{\partial\sigma_{2}}{\partial w_{ij}})_{j}(W)&=& \sigma_2^{ii}(W)_i+\sum_{j\neq i}\sigma_2^{ij}(W)_j\\
&=& (\sum_{l=1}^n w_{ll,i} -w_{ii,i})-\sum_{j\neq i}w_{ji,j}\nonumber\\
&=& \sum_{l=1}^n w_{ll,i} -w_{ii,i}-\sum_{j\neq i}w_{jj,i}=0.\nonumber
\end{eqnarray}

\medskip

We will establish the integral formulae needed.
\begin{lemma}\label{integralformula}
Suppose $M$ and $\tilde M$ are two isometric $C^2$ star-shaped hypersurfaces with respect to origin in the polar coordinates of the ambient space $N^{n+1}(K)$ in (\ref{polarcoord}). Identify any local frame on $M$ with a local frame on $\tilde M$ via isometry. Denote $\Phi$ and $\tilde \Phi$, and $u$ and $\tilde u$ to be the polar potential functions and support functions of $M$ and $\tilde M$, respectively. Then
\begin{eqnarray}\label{zero0}
\quad \quad \left\{
\begin{matrix}
K\int_{M} \sum_{i,j}\sigma_{2}^{ij}(W)\tilde\Phi_{j}\Phi_{i} &=& \int_{M} [\tilde \phi'\phi'\sigma_{1}(W)-2\tilde \phi'u\sigma_{k}(W)],\\
K\int_{M} \sum_{i,j}\sigma_{2}^{ij}(\tilde W)\tilde\Phi_{j}\Phi_{i} &=& \int_{M} [\tilde \phi'\phi'\sigma_{1}(\tilde W)-2\tilde \phi'u\sigma_{1,1}(W,\tilde W)],\\
K\int_{M} \sum_{i,j}\sigma_{2}^{ij}(W)\Phi_{j}\tilde\Phi_{i} &=& \int_{M} [\phi'\tilde \phi'\sigma_{1}(W)-2\phi'\tilde u\sigma_{1,1}(\tilde W,W)],\\
K\int_{M} \sum_{i,j}\sigma_{2}^{ij}(\tilde W)\Phi_{j}\tilde \Phi_{i} &=& \int_{M} [\phi'\tilde \phi'\sigma_{1}(\tilde W)-2\phi'\tilde u\sigma_{2}(\tilde W)].\end{matrix}\right.
\end{eqnarray}\end{lemma}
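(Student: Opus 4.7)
All four identities in (\ref{zero0}) arise from the divergence theorem applied to suitable tangent vector fields on the closed manifold $M$. For the first identity I would take
\[
X^{i} := \sigma_{2}^{ij}(W)\,\tilde\phi'\,\Phi_{j},
\]
and expand $\nabla_{i}X^{i}$ using three facts: the Codazzi identity (\ref{Codazzi}), which gives $\nabla_{i}\sigma_{2}^{ij}(W)=0$; the ODE $\tilde\phi''=-K\tilde\phi$ satisfied by $\tilde\phi$ in each of the three space forms combined with $\tilde\Phi_{i}=\tilde\phi\,\tilde\rho_{i}$, which yields $\nabla_{i}\tilde\phi'=-K\,\tilde\Phi_{i}$ (this is exactly where the coefficient $K$ enters on the left-hand side); and the Hessian identity (\ref{hessianPhi}), $\Phi_{ij}=\phi'g_{ij}-h_{ij}u$. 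Contracting the Hessian with $\sigma_{2}^{ij}(W)$ and invoking the Euler identity $\sigma_{2}^{ij}(W)h_{ij}=2\sigma_{2}(W)$ together with the trace $\sigma_{2}^{ij}(W)g_{ij}$, which is a multiple of $\sigma_{1}(W)$, reproduces the two terms on the right-hand side. Integrating $\nabla_{i}X^{i}$ against the volume form of $M$ gives zero by the divergence theorem, yielding the first identity.

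For the remaining three identities, I would apply the identical recipe to the vector fields
\[
\sigma_{2}^{ij}(\tilde W)\,\tilde\phi'\,\Phi_{j},\qquad \sigma_{2}^{ij}(W)\,\phi'\,\tilde\Phi_{j},\qquad \sigma_{2}^{ij}(\tilde W)\,\phi'\,\tilde\Phi_{j},
\]
respectively. Two small observations are needed to justify the variants. First, $\tilde W$ is a Codazzi tensor on $(\tilde M,\tilde g)$ -- the Codazzi equation for a hypersurface in a space form $N^{n+1}(K)$ carries no curvature obstruction -- and via the isometric identification $\tilde g=g$ it descends to a Codazzi tensor on $(M,g)$, so (\ref{Codazzi}) continues to give $\nabla_{i}\sigma_{2}^{ij}(\tilde W)=0$. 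Second, the contractions $\sigma_{2}^{ij}(W)\tilde h_{ij}$ and $\sigma_{2}^{ij}(\tilde W)h_{ij}$ produce the polarizations $2\sigma_{1,1}(W,\tilde W)$ and $2\sigma_{1,1}(\tilde W,W)$ respectively by definition (\ref{defsigma2}), while the ``matching'' contractions $\sigma_{2}^{ij}(W)h_{ij}$ and $\sigma_{2}^{ij}(\tilde W)\tilde h_{ij}$ reproduce $2\sigma_{2}(W)$ or $2\sigma_{2}(\tilde W)$. This precisely accounts for the distribution of $\sigma_{2}$ versus $\sigma_{1,1}$ terms across the four right-hand sides.

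The main point requiring care is the bookkeeping under the isometric identification: $\tilde\Phi$, $\tilde\phi'$, $\tilde u$ and $\tilde h$ must be consistently interpreted as functions and tensors on $M$ by composition with the isometry, and their intrinsic gradients and covariant derivatives on $(M,g)$ must be shown to coincide with the corresponding objects on $(\tilde M,\tilde g)$ under this identification. Once this is pinned down, the derivation reduces to the three-term expansion of the divergence outlined above followed by Stokes' theorem; I do not anticipate a serious analytical obstacle beyond tracking the algebra, and the main conceptual step is recognizing that the coefficient $K$ appears naturally from the ODE governing $\phi'$.
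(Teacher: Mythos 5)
Your core computation is exactly the paper's: for the smooth case the authors also contract the Hessian identity (\ref{hessianPhi}) (and its tilded analogue) against $\sigma_2^{ij}(W)$ and $\sigma_2^{ij}(\tilde W)$, use $\nabla_i\phi'=-K\nabla_i\Phi$ to produce the factor $K$ on the left, use the divergence-free identity (\ref{Codazzi}) for both $W$ and $\tilde W$ (the latter being Codazzi on $(M,g)=(\tilde M,\tilde g)$ under the identification, as you note), and integrate by parts. Your packaging of this as $\int_M \nabla_i X^i=0$ for the four vector fields is the same argument.

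There is, however, one genuine gap: the lemma is stated for $C^2$ hypersurfaces, while your argument needs $\nabla_i\sigma_2^{ij}(W)=0$, i.e.\ it differentiates the Weingarten tensor, which requires the hypersurfaces (equivalently the induced metrics) to be $C^3$ --- indeed the paper states identity (\ref{Codazzi}) only under the hypothesis $g\in C^3$. The paper therefore first proves (\ref{zero0}) for $C^3$ hypersurfaces exactly as you do, and then devotes the second half of the proof to an approximation argument: $M$ and $\tilde M$ are approximated by $C^3$ hypersurfaces $M^\epsilon$, $\tilde M^\epsilon$, and the subtlety is that these approximants need not be isometric to each other, so the two ``mixed'' identities (those involving $\sigma_{1,1}$, where $\sigma_2^{ij}$ of one Weingarten tensor is paired with data of the other hypersurface) only hold up to error terms $\mathcal{R}^\epsilon_i$ depending on the differences of derivatives of $g^\epsilon$ and $\tilde g^\epsilon$ up to second order; one must argue these tend to zero as $\epsilon\to 0$. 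Your proposal, as written, proves the lemma only in the $C^3$ category and does not address this limiting step. (A very minor further point: the trace $\sigma_2^{ij}(W)g_{ij}$ equals $(n-1)\sigma_1(W)$, so the constant in front of $\sigma_1$ in (\ref{zero0}) should be tracked; your phrase ``a multiple of $\sigma_1(W)$'' is correct but the multiple cancels in the application only because it is the same in all four identities.)
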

\begin{proof}
We first assume $M$ and $\tilde M$ are $C^3$ star-shaped hypersurfaces.
For any local frame $\{e_1,\cdots, e_n\}$ on $M$, it is also a local frame on $\tilde M$. By the assumption of isometry, $g_{ij}=\tilde g_{ij}$.  We note that, with $\phi$ and $\Phi$ defined in (\ref{Phi}),
\begin{equation}\label{phiPhi}
\nabla_i \Phi=-K\nabla_i \phi^{'}, \quad \forall i=1,\cdots, n.\end{equation}
Denote $\tilde \phi=\phi(\tilde \rho), \tilde\phi'=\phi'(\tilde\rho), \tilde\Phi=\Phi(\tilde\rho)$. It follows from (\ref{hessianPhi}),
\begin{eqnarray}
   \tilde\phi' \nabla_{i}\nabla_j\Phi =\tilde\phi'(\phi' g_{ij}-h_{ij}u),\quad
      \phi' \nabla_{i}\nabla_j\tilde\Phi =\phi'(\tilde \phi' \tilde g_{ij}-\tilde h_{ij}\tilde u)\nonumber.
\label{hessianPhi0}
  \end{eqnarray}
Contracting $\sigma_{2}^{ij}(W)$ and $\sigma_2^{ij}(\tilde W)$ with equations in above, (\ref{zero0}) follows from identities (\ref{Codazzi}), (\ref{phiPhi}) and integration by parts.

(\ref{zero0}) for $C^2$ hypersurfaces can be verified by apprixmation. We may approximate them by $C^3$ hypersurfaces $M^{\epsilon}$ and $\tilde M^{\epsilon}$. Note that $M^{\epsilon}$ and $\tilde M^{\epsilon}$ may not be isometric. For any local frame on $M$,  the following still holds,
\begin{eqnarray}
   \tilde\phi^{\epsilon'} \nabla_{i}\nabla_j\Phi^{\epsilon} =\tilde\phi^{\epsilon'}(\phi^{\epsilon'} g^{\epsilon}_{ij}-h^{\epsilon}_{ij}u^{\epsilon}),\quad
      \phi^{\epsilon'} \nabla_{i}\nabla_j\tilde\Phi^{\epsilon} =\phi^{\epsilon'}(\tilde \phi^{\epsilon'} \tilde g^{\epsilon}_{ij}-\tilde h^{\epsilon}_{ij}\tilde u^{\epsilon})\nonumber.
\label{hessianPhi10}
  \end{eqnarray}
Using $(g^{\epsilon})^{-1}$ to contract with $\sigma_2^{ij}(W^{\epsilon})$ and using $(\tilde g^{\epsilon})^{-1}$ to contract with $\sigma_2^{ij}(\tilde W^{\epsilon})$, we may perform integration by parts as before at $\epsilon$-level. By (\ref{Codazzi}) and (\ref{phiPhi}),
\begin{eqnarray*}
K\int_{M} \sum_{i,j}\sigma_{2}^{ij}(W^{\epsilon})\tilde\Phi^{\epsilon}_{j}\Phi^{\epsilon}_{i}dV_{g^{\epsilon}} &=& \int_{M} [\tilde \phi^{\epsilon'}\phi^{\epsilon'}\sigma_{1}(W^{\epsilon})-2\tilde \phi^{\epsilon'}u^{\epsilon}\sigma_{k}(W^{\epsilon})]dV_{g^{\epsilon}},\\
K\int_{M} \sum_{i,j}\sigma_{2}^{ij}(\tilde W^{\epsilon})\Phi^{\epsilon}_{j}\tilde \Phi^{\epsilon}_{i}dV_{\tilde g^{\epsilon}} &=& \int_{M} [\phi^{\epsilon'}\tilde \phi^{\epsilon'}\sigma_{1}(\tilde W^{\epsilon})-2\phi^{\epsilon'}\tilde u^{\epsilon}\sigma_{2}(\tilde W^{\epsilon})]dV_{\tilde g^{\epsilon}},\\
K\int_{M} \sum_{i,j}\sigma_{2}^{ij}(\tilde W^{\epsilon})\tilde\Phi^{\epsilon}_{j}\Phi^{\epsilon}_{i}dV_{\tilde g^{\epsilon}} &=& \int_{M} [\tilde \phi^{\epsilon'}\phi^{\epsilon'}\sigma_{1}(\tilde W^{\epsilon})-2\tilde \phi^{\epsilon'}u^{\epsilon}\sigma_{1,1}(W^{\epsilon},\tilde W^{\epsilon})]dV_{\tilde g^{\epsilon}}\\
&&+\mathcal{R}^{\epsilon}_1,\\
K\int_{M} \sum_{i,j}\sigma_{2}^{ij}(W^{\epsilon})\Phi^{\epsilon}_{j}\tilde\Phi^{\epsilon}_{i}dV_{g^{\epsilon}} &=& \int_{M} [\phi^{\epsilon'}\tilde \phi^{\epsilon'}\sigma_{1}(W^{\epsilon})-2\phi^{\epsilon'}\tilde u^{\epsilon}\sigma_{1,1}(\tilde W^{\epsilon},W^{\epsilon})]dV_{g^{\epsilon}}\\
&&+\mathcal{R}^{\epsilon}_2,\end{eqnarray*}
where the error terms $\mathcal{R}^{\epsilon}_i, i=1,2$ involves only the differences of derivatives of $g^{\epsilon}$ and $\tilde g^{\epsilon}$ up to second order. Therefore, $\mathcal{R}^{\epsilon}_i \to 0$  as $\epsilon\to 0$ for $i=1,2$.  (\ref{zero0}) follows for $C^2$ isometric star-shaped hypersurfaces by letting $\epsilon\to 0$.
\end{proof}

\medskip

With integra formulae (\ref{zero0}), we follow the similar argument of Herglotz in \cite{herglotz} (see also \cite{hopf}) and using the Garding's inequality as in \cite{chern} to prove that two isometric star-shaped compact hypersurfaces in $N^{n+1}(K)$ share the same second fundamental form.

We now proceed to prove the main result below.
The term \emph{congruency} will be used to describe two hypersurfaces in $N^{n+1}(K)$ that differ by an isometry of the ambient space.

\begin{thm} Two $C^2$ isometric compact star-shaped hypersurfaces in $N^{n+1}(K)$ with the normalized scalar curvature $R$ strictly larger than $K$ are congruent if $K=-1, 0$. Two $C^2$ isometric compact star-shaped hypersurfaces in $\mathbb S^{n+1}$ with normalized scalar curvature strictly larger than $+1$ are congruent if the hypersurface are contained in some (may be different) hemispheres.
\end{thm}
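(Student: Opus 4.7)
The plan is to use the four integral identities of Lemma~\ref{integralformula} to force $W\equiv\tilde W$ pointwise, and then to invoke the fundamental theorem for hypersurfaces in space forms to produce the desired ambient isometry.

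Because $W$ and $\tilde W$ are symmetric tensors, the matrices $\sigma_2^{ij}(W)$ and $\sigma_2^{ij}(\tilde W)$ are symmetric in $(i,j)$. Consequently the left-hand sides of the first and third equations of~(\ref{zero0}) coincide, as do those of the second and fourth. Equating each pair of right-hand sides cancels the common term involving $\phi'\tilde\phi'\sigma_1$ and leaves
\begin{equation*}
\int_M \tilde\phi'\, u\,\sigma_2(W)\, dV_g \;=\; \int_M \phi'\,\tilde u\,\sigma_{1,1}(W,\tilde W)\, dV_g,
\end{equation*}
\begin{equation*}
\int_M \tilde\phi'\, u\,\sigma_{1,1}(W,\tilde W)\, dV_g \;=\; \int_M \phi'\,\tilde u\,\sigma_2(\tilde W)\, dV_g.
\end{equation*}
Since the isometry preserves the normalized scalar curvature, (\ref{scalarcurv}) gives $\sigma_2(W)\equiv\sigma_2(\tilde W)$ on $M$. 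Subtracting the first identity from the second and using this pointwise equality produces the single weighted identity
\begin{equation*}
\int_M \bigl(\phi'\,\tilde u + \tilde\phi'\, u\bigr)\bigl[\sigma_{1,1}(W,\tilde W)-\sigma_2(W)\bigr]\, dV_g=0.
\end{equation*}

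By Lemma~\ref{garding} the bracket is non-negative pointwise, with equality forcing $W=\tilde W$ at that point. The weight $\phi'\tilde u+\tilde\phi' u$ is strictly positive on $M$: star-shapedness gives $u,\tilde u>0$, and $\phi'(\rho)$ equals $\cosh\rho$ or $1$ (both positive) when $K\in\{-1,0\}$; when $K=+1$ the hemisphere hypothesis lets us center the polar coordinates of $M$ and $\tilde M$ at the poles of their containing hemispheres, so that $\rho,\tilde\rho<\pi/2$ and $\phi'=\cos\rho$, $\tilde\phi'=\cos\tilde\rho$ are strictly positive on $M$. Thus the integrand vanishes identically, so $\sigma_{1,1}(W,\tilde W)\equiv\sigma_2(W)$ and the equality case of Lemma~\ref{garding} yields $W\equiv\tilde W$. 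Combined with the hypothesis $g\equiv\tilde g$, the fundamental theorem for hypersurfaces in $N^{n+1}(K)$ produces an ambient isometry taking $M$ to $\tilde M$.

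The main delicate point is the positivity of the weight in the spherical case: without the hemisphere hypothesis $\cos\rho$ could vanish or change sign along $M$, and pointwise vanishing of $\sigma_{1,1}-\sigma_2$ could no longer be deduced from the integral identity. This is the only place where the hemisphere assumption enters; for $K\le 0$ star-shapedness alone suffices. A minor bookkeeping matter is that in the spherical case the polar origins used for $M$ and for $\tilde M$ may differ, but all relevant quantities $\tilde\Phi,\tilde u,\tilde\phi'$ associated to $\tilde M$ transfer to $M$ via the isometry precisely as required by Lemma~\ref{integralformula}, so the identities apply verbatim.
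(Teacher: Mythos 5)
Your proposal is correct and follows essentially the same route as the paper: pairing the first with the third and the second with the fourth identity of Lemma~\ref{integralformula} (the symmetry of $\sigma_2^{ij}$ making the left-hand sides cancel), using $\sigma_2(W)=\sigma_2(\tilde W)$ and the symmetry of $\sigma_{1,1}$ to arrive at the weighted integral identity, and then invoking the positivity of $\tilde\phi'u+\phi'\tilde u$ together with Lemma~\ref{garding} to conclude $W\equiv\tilde W$ and hence congruence. Your explicit justification of why the weight is positive in each of the three cases, and of how the possibly different hemispheres are handled, matches the paper's intent.
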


\begin{proof} We may assume that $M$ and $\tilde{M}$ are two star-shaped hypersurfaces with respect to a fixed point $p\in N^{n+1}(K)$. We may use polar coordinates in (\ref{polarcoord}) and assume $p=0$.
In this setting, $M$ and $\tilde{M}$ are in the same $N^{n+1}_{+}(K)$.

Subtracting the first equation in (\ref{zero0}) from the third and the second from the fourth,
\begin{eqnarray*}
\int_{M} \tilde\phi'u\sigma_{2}(W)&=&\int_{M} \phi'\tilde u\sigma_{1,1}(W,\tilde{W}),\\
\int_{M} \phi'\tilde{u}\sigma_{2}(\tilde{W})&=&\int_{M} \tilde\phi'u\sigma_{1,1}(\tilde{W},W)
\end{eqnarray*}
As $\sigma_2(W)=\sigma_2(\tilde W)$ and $\sigma_{1,1}(W,\tilde W)=\sigma_{1,1}(\tilde W, W)$, it follows that
\begin{equation}\label{zero}
\int_{M} (\tilde\phi'u+\phi'\tilde{u})(\sigma_{2}(W)-\sigma_{1,1}(W,\tilde W) )= 0.
\end{equation}
Since the support functions $u$ and $\tilde{u}$ are strictly positive and the hypersurfaces are in the same $N^{n+1}_{+}(K)$, $\phi'(x)\tilde \phi'(x)>0, \forall x\in M$.
That is, $\tilde\phi'u+\phi'\tilde{u}$ is nowhere vanishing on $M$.
We conclude from (\ref{zero}) and Lemma \ref{garding} that on $M$,
\[\sigma_{2}(W)-\sigma_{1,1}(W,\tilde W)\equiv 0.\] Again by Lemma \ref{garding}, $W=\tilde{W}$ on $M$. Thus, the first and second fundamental forms of $M$ and $\tilde M$ are the same. This implies that $M$ and $\tilde M$ are congruent.
\end{proof}


\begin{thebibliography}{99}

\bibitem{chern} S. S. Chern, {\em Integral formulas for hypersurfaces in Euclidean space and their applications to uniqueness theorems}, J. Math. Mech, \textbf{8}, (1959), 947-956.

\bibitem{CV} S. Cohn-Vossen, {\em Unstarre geschlossene Fl\"achen}, Math. Ann., \textbf{102} (1929), 10-29.

\bibitem{garding} L. Garding, {\em An inequality for hyperbolic polynomials}, J. Math. Mech, \textbf{8}, (1959), 957-965.

\bibitem{GL} P. Guan and J. Li, {\em A mean curvature type flow in space forms}. Preprint.

\bibitem{herglotz} G. Herglotz, {\em Ueber die Starrheit von Eifl�chen}. Abh. Math. Sem. Univ. Hamburg, \textbf{15}, (1943) 127�129.

\bibitem{hopf} H. Hopf, {\em Differential Geometry in the Large:Seminar Lectures New York University 1946 and Stanford University 1956}, Lecture Notes in Mathematics, 1000, Springer Verlag, (1983).

\bibitem{spivak} M. Spivak, {\em A comprehensive introduction to differential geometry, Volume 5}.  Publsh or Perish, Inc. (1979).
\end{thebibliography}
\end{document}